\newtheorem{thm}{Theorem}[section]
\newtheorem{lemma}[thm]{Lemma}
\newtheorem*{thm*}{Theorem}
\newtheorem*{lemma*}{Lemma}
\newtheorem*{prop*}{Proposition}
\newtheorem*{cor*}{Corollary}
\newtheorem*{conj*}{Conjecture}
\theoremstyle{definition}
\newtheorem{defn}[thm]{Definition}
\newtheorem{ex}[thm]{Example}
\newtheorem*{ques*}{Question}
\DeclareMathOperator{\free}{free}
\title{Local and global $d$-rigidity are not definable in the first order logic of graphs}
\author{Daniel Irving Bernstein and Nathaniel Vaduthala}
\date{\today}
\begin{document}

\maketitle

\begin{abstract}
    We use Hanf locality and a result of Cruickshank, Jackson, and Tanigawa on the global rigidity of graphs of $k$-circuits
    to prove that local and global $d$-rigidity are not definable in the first order logic of graphs.
\end{abstract}

\section{Introduction}
One can imagine constructing a graph $G$ in $d$-dimensional space using rigid bars as edges that are free to move around their incident vertices,
and ask if the resulting structure is rigid.
If the positions of the vertices of such a structure are sufficiently ``generic,'' then whether the resulting structure is rigid depends only on $G$ and $d$, and not how the graph is built; in this case, we say that $G$ is \emph{locally $d$-rigid}.
There is also a related (stronger) notion of \emph{global $d$-rigidity}.
The search for elegant combinatorial descriptions of the classes of locally and globally $d$-rigid graphs has motivated a lot of research in rigidity theory.
Such characterizations have been obtained for $d = 1$ and $d = 2$~\cite{pollaczek1927gliederung,connelly2005generic}.
In spite of over a century of effort, the $d \ge 3$ case remains open (but see~\cite{clinch2019abstract} for promising related work on the $d=3$ case).

This raises the question: how sophisticated of a language does one need to characterize rigidity in $d \ge 3$ dimensions?
Mathematical logic provides the concepts necessary to make this question precise and to start looking for answers.
Our hope is that this paper will be the first of many that make connections between rigidity theory and mathematical logic
for the purposes of lower-bounding the complexity of the theoretical framework required to combinatorially characterize rigidity.
To this end, the main result of our paper is Theorem~\ref{thm: not first-order},
which says that any combinatorial characterization of local or global $d$-rigidity cannot be phrased in first-order logic.
This generalizes the well-known result that graph connectivity is not definable in first order logic.

Knowing that a graph property is expressible in a particular logic has complexity implications.
Perhaps the most famous graph-theoretic instance of this is a theorem of Courcelle~\cite{Courcelle_graph},
which says that if a graph property is expressible in \emph{(counting) monadic second order logic},
then the decision problem of whether a given graph satisfies that property can be solved in linear time,
if one restricts to graphs of bounded treewidth.

The paper is structured as follows.
Section~\ref{sec: rigidity theory} defines local and global $d$-rigidity and states what is known about them.
Section~\ref{sec: logic} has the necessary background from mathematical logic.
This includes the first-order logic definitions of formulas, sentences, theories, models, and queries,
as well as Hanf locality, which is the tool from model theory that we use in Section~\ref{sec: the construction} to prove that local and global $d$-rigidity are not expressible in first-order logic.
Our application of Hanf locality requires us to construct a pair of graphs $G$ and $H$ satisfying a certain ``local isomorphism'' condition such that $G$ is globally rigid but $H$ is not.
Our construction here makes crucial use of recent work of Cruickshank, Jackson, and Tanigawa~\cite{cruickshank2024global} on global rigidity of triangulated manifolds.

\section{Background on rigidity theory}\label{sec: rigidity theory}
A \emph{$d$-framework} is a pair $(G,p)$ consisting of a graph $G = (V,E)$ and a function $p: V \rightarrow \mathbb{R}^d$.
One should think of $d$-frameworks as physical constructions of graphs in $d$-dimensional space where the edges are rigid bars that are free to move around their incident vertices; we will be interested in whether such structures are \emph{rigid}, a notion we now make precise.
Two $d$-frameworks $(G,p)$ and $(G,q)$ are \emph{equivalent} if
\begin{equation}\label{eq: same length}
    \|p(v)-p(w)\| = \|q(v)-q(w)\|
\end{equation}
whenever $uv$ is an edge of $G$; they are moreover \emph{congruent} when \eqref{eq: same length} holds for every pair $u,v \in V$ (not just edges).
It follows from the definitions that two congruent frameworks are equivalent, but the converse need not hold.
A $d$-framework $(G,p)$ is said to be \emph{globally rigid} if every equivalent framework is congruent,
and \emph{locally rigid} if there exists $\varepsilon > 0$ such that $(G,q)$ is congruent to $(G,p)$
whenever $\|p(v) - q(v)\| \le \varepsilon$ for all $v \in V$ and $(G,q)$ is equivalent to $(G,p)$.

For any graph $G = (V,E)$, it is either the case that almost every (with respect to Lebesgue measure) $p: V \rightarrow \mathbb{R}^d$ satisfies the property that
$(G,p)$ is locally/globally rigid, or that almost every $p: V \rightarrow \mathbb{R}^d$ satisfies the property that
$(G,p)$ is \emph{not} locally/globally rigid~\cite{asimow1978rigidity,gortler2010characterizing}.
In other words, for sufficiently generic $p: V \rightarrow \mathbb{R}^d$, local/global rigidity of the framework $(G,p)$ depends only on $G$ and not on $p$.
With this in mind, the following definition is well-formulated.

\begin{defn}
    A graph $G = (V,E)$ is \emph{globally/locally $d$-rigid} if $(G,p)$ is globally/locally $d$-rigid for almost every $p: V \rightarrow \mathbb{R}^d$.
\end{defn}

The search for elegant combinatorial characterizations of the classes of globally/locally $d$-rigid graphs for each $d$ is a major part of rigidity theory research.
It is a relatively straightforward exercise to check that a graph is locally $1$-rigid if and only if it is connected.
A graph is globally $1$-rigid if and only if its $2$-connected~\cite[Theorem~3.8]{jackson2007notes}.
A characterization of local $2$-rigidity due to Hilda Geiringer is as follows.
\begin{thm}{\cite{pollaczek1927gliederung}}\label{thm: local 2-rigidity}
    Let $G = (V,E)$ be a graph. Then $G$ is locally $2$-rigid if and only if $G$ has a spanning subgraph $H$ satisfying the following properties
    \begin{enumerate}
        \item $H$ has $2|V| - 3$ edges, and
        \item every subgraph of $H$ on $m$ vertices has at most $2m-3$ edges.
    \end{enumerate}
\end{thm}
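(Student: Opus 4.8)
The plan is to translate local $2$-rigidity into a statement about the rank of a matrix and then run the classical Henneberg induction. Recall that the \emph{rigidity matrix} $R(G,p)$ has one row per edge $uv$, with $p(u)-p(v)$ in the block of coordinates indexed by $u$, its negation in the block indexed by $v$, and zeros elsewhere; its kernel consists of the infinitesimal motions of $(G,p)$, which always contains the $3$-dimensional space of infinitesimal isometries of $\mathbb{R}^2$. A framework is \emph{infinitesimally rigid} when the kernel is exactly this $3$-space, i.e.\ when $\operatorname{rank} R(G,p) = 2|V|-3$. By a theorem of Asimow and Roth~\cite{asimow1978rigidity}, for generic $p$ local rigidity of $(G,p)$ coincides with infinitesimal rigidity; and $\operatorname{rank} R(G,p)$ is the same for all generic $p$, so the sets of rows that are linearly independent at a generic $p$ form a matroid $\mathcal{R}_2(G)$ on $E$. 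Thus $G$ is locally $2$-rigid if and only if $\operatorname{rank}\mathcal{R}_2(G) = 2|V|-3$. I would also record \emph{Maxwell's inequality}: if a set of edges is independent in $\mathcal{R}_2$ and its endpoints span $m \ge 2$ vertices, it has at most $2m-3$ edges, proved by restricting to the induced subframework, whose rigidity matrix has corank at least $3$.

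Necessity is then immediate. If $G$ is locally $2$-rigid, let $H$ be a basis of $\mathcal{R}_2(G)$; then $|E(H)| = \operatorname{rank}\mathcal{R}_2(G) = 2|V|-3$, giving (1). If $H$ missed a vertex $v$, then $E(H)$ would lie in $G - v$ and hence have at most $\operatorname{rank}\mathcal{R}_2(G-v) \le 2(|V|-1)-3 < 2|V|-3$ edges, a contradiction, so $H$ spans $V$. Finally every subgraph of $H$ is independent in $\mathcal{R}_2$, so (2) is exactly Maxwell's inequality applied to each of them.

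For sufficiency, suppose $H$ is a spanning subgraph obeying (1) and (2) --- a \emph{Laman graph}. Since rigidity is monotone under adding edges, it suffices to show $H$ itself is locally $2$-rigid, which I would do by induction on $|V|$ using a Henneberg construction. The combinatorial input is the lemma that every Laman graph on $n \ge 3$ vertices arises from a Laman graph on $n-1$ vertices by a \emph{$0$-extension} (add a vertex of degree $2$) or a \emph{$1$-extension} (remove an edge $xy$ and add a vertex joined to $x$, $y$, and a third vertex). One first notes, by counting, that $H$ has a vertex of degree $2$ or $3$; a degree-$2$ vertex yields a $0$-reduction at once (its two neighbours are distinct, as (2) forbids multi-edges), while for a degree-$3$ vertex $z$ with neighbours $x,y,w$ one must show that for some pair, say $\{x,y\}$, the edge $xy$ is absent from $H-z$ and its addition preserves all the sparsity counts, so that $H-z+xy$ is Laman and $H$ is a $1$-extension of it. The geometric input is that both operations preserve generic infinitesimal rigidity: a $0$-extension adds a vertex pinned in the plane by its two new bars, so the rank of $R$ rises by $2$; a $1$-extension also raises the rank by $2$, which one checks by placing the new vertex on the line through $x$ and $y$ --- this forces any infinitesimal motion to satisfy the constraint of the deleted edge $xy$, so it restricts to a trivial motion of the inductively rigid framework $(H-z+xy,p)$, and the bar to $w$ (chosen off that line) then pins the new vertex as well. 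With the single edge $K_2$ as a trivially rigid base case, the induction closes. An alternative to the Henneberg argument is the matroid-union proof of Lovász and Yemini, but it ultimately relies on the same delicate use of the $(2,3)$-count.

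I expect the main obstacle to be the $1$-extension half of the Henneberg lemma: showing that a degree-$3$ vertex can always be removed and ``patched'' by a single new edge without creating an overly dense subgraph. The argument assumes otherwise and combines the three tight vertex sets forced by the three candidate edges, using that tight sets sharing at least two vertices have a tight union, to contradict property (2) for $H$; locating the right overlap among the three sets is the fiddly step, and it --- together with the rank computation for a $1$-extension --- is the only place where the exact sparsity bound, rather than soft monotonicity of rigidity, does real work.
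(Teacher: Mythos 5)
The paper does not prove this theorem; it is quoted as a classical result of Geiringer (and, independently, Laman), so there is no in-paper argument to compare against. Your sketch is the standard proof and is essentially correct: reduce local $2$-rigidity to the generic rank of the rigidity matrix via Asimow--Roth, get necessity from Maxwell's count applied to a basis of the generic rigidity matroid, and get sufficiency by Henneberg induction, using that $0$- and $1$-extensions preserve generic infinitesimal rigidity (the collinear-placement trick for the $1$-extension is the right argument, together with the observation that infinitesimal rigidity at one configuration implies it generically). You have also correctly located the genuinely delicate step: the combinatorial lemma that a degree-$3$ vertex of a Laman graph admits a $1$-reduction, which requires the argument about tight (critical) vertex sets and the fact that two tight sets meeting in at least two vertices have tight union; as written this is a plan rather than a proof, but it is the correct plan. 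Two small points worth tidying in a full write-up: handle the degenerate cases $|V|\le 2$ explicitly (the count $2|V|-3$ is vacuous or trivial there), and note that minimum degree at least $2$ in a Laman graph follows from property (2) applied to $H-v$, which is what guarantees the existence of a vertex of degree exactly $2$ or $3$ to start the reduction.
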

See~\cite{connelly2005generic} for a characterization of global $2$-rigidity.
Combinatorial characterizations of local and global $d$-rigidity for $d\ge 3$ remain elusive.

\section{Background on mathematical logic and model theory}\label{sec: logic}
We now provide the necessary background on first-order logic and model theory.
For a more comprehensive introduction, we refer to \cite{ebbinghaus_mathlogic}.

\subsection{Structures and First-Order Logic}
A set of symbols is called an \emph{alphabet}.
An \emph{arity function} on an alphabet $\sigma$ is a function $\alpha: \sigma \rightarrow \{1,2,\dots\}$.
Given a pair $(\sigma,\alpha)$ consisting of an alphabet $\sigma$ and arity function $\alpha$,
one should think of each symbol $R \in \sigma$ as an $\alpha(R)$-ary relation.
Broader mathematical logic allows for \emph{function} and \emph{constant} symbols as well as relation symbols,
but the axioms for graphs only require relation symbols,
and no expressive power is lost by restricting to relations \cite[Chapter VIII, Theorem 1.3]{ebbinghaus_mathlogic}.

% A \emph{relation symbol of arity $n$} is a symbol $R$ that can take $n$ inputs.
% A \emph{(relational) symbol set} is a collection of relation symbols.

% Symbol sets in broader mathematical logic are allowed to have constant and function symbols as well.
% However, the only structures we consider in this paper are graphs which only require relation symbols.

\begin{defn}\label{defn: first-order}
    Define $\mathcal{V}$ to be the alphabet $\{\lnot,\land,\lor,),(,\exists,\forall,\Rightarrow,\Leftrightarrow\} \cup \{v_i : i \in \mathbb{N}\}$
    and let $\sigma$ be an alphabet with arity function $\alpha$ such that $\sigma \cap \mathcal{V} = \emptyset$.
    We inductively define \emph{first order $\sigma$-formulae}
    as strings obtainable from a finite application of the following rules
    \begin{itemize}
        \item for $i,j \in \mathbb{N}$, $v_i = v_j$ is a first order $\sigma$-formula.
        \item If $R \in \sigma$ satisfies $\alpha(R) = k$ and $i_1,\dots,i_k \in \mathbb{N}$, then $R(v_{i_1}, \dots, v_{i_k})$ is a first order $\sigma$-formula.
        \item If $\phi_1, \phi_2$ are first order $\sigma$-formulae, then $(\phi_1 \lor \phi_2)$, $(\phi_1 \land \phi_2)$, $(\lnot \phi_1)$, $(\phi_1 \Rightarrow \phi_2)$, and $(\phi_1 \Leftrightarrow \phi_2)$ are first order $\sigma$-formulae. 
        \item If $\phi$ is a first order $\sigma$-formula, then $\exists v_i\, \phi_1$ and $\forall v_i \, \phi$ are first order $\sigma$-formulae.
    \end{itemize}
\end{defn}

One can build on first-order logic to obtain more expressive logical systems.
For example, \emph{second order logic} allows for quantification over relations in addition to quantification over variables.
However, gains in expressive power come at the cost of powerful model-theoretic results~\cite{ebbinghaus_mathlogic}.
\emph{Monadic second order logic} is a logic that is more expressive than first-order logic, but less expressive than second order logic.
It is particularly important for graph theory and related algorithmic complexity questions due to a theorem of Courcelle~\cite{Courcelle_graph}
which says that graph properties expressible in this logic can be decided in linear time if one restricts to graphs of bounded treewidth.
In this paper, we only consider first order logic.

\begin{defn}
    Given a first order $\sigma$-formula $\phi$, we inductively define the \emph{free set of $\phi$}, denoted by $\free(\phi)$, as follows
    \begin{itemize}
        \item $\free(v_i = v_j) = \{v_i, v_j\}$
        \item $\free(R(v_{i_1}, \dots, v_{i_k})) = \{v_{i_1}, \dots, v_{i_k}\}$
        \item $\free(\lnot \phi) = \free(\phi)$
        \item if $\phi_1, \phi_2$ are formulae and $* \in\{ \land, \ \lor,  \ \Rightarrow, \, \Leftrightarrow\}$, then $\free(\phi_1 \ * \ \phi_2) = \free(\phi_1) \cup \free(\phi_2)$
        \item $\free(\forall v \ \phi) = \free(\phi)\setminus\{v\}$
        \item $\free(\exists v \ \phi) = \free(\phi)\setminus\{v\}$
    \end{itemize}
    If $\free(\phi) = \emptyset$ then $\phi$ is a \emph{first order $\sigma$-sentence}.
\end{defn}

Given a symbol set $\sigma$, a \emph{$\sigma$-structure}
is a pair $\mathfrak{A} = ( A, \mathfrak{R})$ where $A$ is a set
and $\mathfrak{R}$ is a set indexed by $\sigma$ where the element corresponding to an element $R \in \sigma$ of arity $n$
is a subset $S_R \subseteq A^n$.
If $\sigma$ is empty a $\sigma$-structure is a set.
If $\sigma$ consists of a single unary relation $R$,
then an example of a $\sigma$-structure is a pair of sets $(X, S)$ where $S$ is a distinguished subset of $X$.

If the underlying set of a structure is finite, then the structure is said to be a \emph{finite $\sigma$-structure}. We will be only considering finite structures in this paper. The set of all finite $\sigma$-structures is denoted ${\rm STRUCT}[\sigma]$.

Given a $\sigma$-sentence $\phi$ and a $\sigma$-structure $\mathfrak{A}$,
we say that \emph{$\mathfrak{A}$} is a \emph{model of $\phi$} if $\phi$ is true in $\mathfrak{A}$.
We denote this in symbols as $\mathfrak{A} \models \phi$.
This notation extends set-wise -- if $\mathcal{T}$ is a $\sigma$-theory, we say that $\mathfrak{A}$ is a \emph{model of $\mathcal{T}$} and write $\mathfrak{A} \models \mathcal{T}$ if $\mathfrak{A} \models \phi$ for every $\phi \in \mathcal{T}$.

\begin{ex}\label{ex: theory of graphs}
    Let $\sigma = \{\sim\}$ where $\sim$ is a binary relation symbol.
    The first-order theory of graphs $\mathcal{T}_{\text{Graph}}$ consists of the single sentence $\forall u\, \forall v\, (u \sim v \Rightarrow v \sim u)$.
    A $\sigma$-structure is a pair $\langle V, \{\sim\}\rangle$ where $\sim$ is a binary relation on $V$.
    A $\sigma$-structure $\mathfrak{A}$ such that $\mathfrak{A} \models \mathcal{T}_{\text{Graph}}$ is a graph --- the edges are the pairs $x,y \in V$ such that $x \sim y$.
\end{ex}

We are interested in saying things about models of $\mathcal{T}_{\text{Graph}}$, i.e.~Graphs, but
the theorem we want to use from mathematical logic applies to $\sigma$-structures.
The following construction allows us to bridge this gap.

\begin{defn}\label{defn: Gaifman graph}
    Given a $\{\sim\}$-structure $\mathfrak{A} = (A,E)$, the associated \emph{Gaifman graph}, denoted $G(\mathfrak{A})$,
    is the graph with vertex set $A$ where $uv$ is an edge whenever $(u,v) \in E$ or $(v,u) \in E$.
\end{defn}

When $\mathfrak{A} \models \mathcal{T}_{\rm Graph}$, then $\mathfrak{A}$ and its Gaifman graph $G(\mathfrak{A})$ are essentially the same thing.
In this case we make no distinction between the two objects.

\subsection{Queries}\label{subsection: queries}
Given a symbol set $\sigma$, a \emph{Boolean query on $\sigma$-structures} is a function
\[
    Q: {\rm STRUCT}[\sigma] \rightarrow \{{\rm True},{\rm False}\}.
\]
We are particularly interested in the Boolean queries ${\rm LOC}$ and ${\rm GLO}$ on $\{\sim\}$-structures
which we define as follows
\begin{align}
    {\rm LOC}_d(\mathfrak{A}) &= {\rm True} \quad \Longleftrightarrow \quad \mathfrak{A} \models \mathcal{T}_{\rm Graph} \ {\rm and} \ \mathfrak{A} \ {\rm is \ locally \ } d{\rm -rigid}\label{loc} \\
    {\rm GLO}_d(\mathfrak{A}) &= {\rm True} \quad \Longleftrightarrow \quad \mathfrak{A} \models \mathcal{T}_{\rm Graph} \ {\rm and} \ \mathfrak{A} \ {\rm is \ globally \ } d{\rm -rigid}.\label{glo}
\end{align}

A Boolean query $Q$ is \emph{definable in first-order logic} if there is a first-order sentence $\phi$ such that $Q(\mathfrak{A}) = \text{True}$ if and only if $\mathfrak{A} \models \phi$.
We will show that ${\rm LOC}_d$ and ${\rm GLO}_d$ are not definable in the first order logic by showing that they are not \emph{Hanf local},
which is a property of queries known to be satisfied by all those definable in the first-order logic.
For a more comprehensive introduction to Hanf locality, see \cite[Section 4.1]{libkin_modeltheory}.

\begin{defn}
    A Boolean query $Q$ for $\{\sim\}$-structures is \emph{Hanf-local} if there exists a number $r \geq 0$ such that for any
    $\{\sim\}$-structures $\mathfrak{A}_1,\mathfrak{A}_2$,
    \begin{equation*}
        G(\mathfrak{A}_1) \rightleftarrows_r G(\mathfrak{A}_2) \qquad \text{implies} \qquad Q(\mathfrak{A}_1) = Q(\mathfrak{A}_2)
    \end{equation*}
\end{defn}

\begin{thm}[{\cite[Theorem 3.2]{LIBKIN2006115}}]\label{thm: FO implies Hanf}
    Every first-order definable query is Hanf-local.
\end{thm}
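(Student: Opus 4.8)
The plan is to reduce the statement to a standard Ehrenfeucht--Fra\"{\i}ss\'e game argument. Let $Q$ be a first-order definable query, witnessed by a $\{\sim\}$-sentence $\phi$, and let $k$ be the quantifier rank of $\phi$. It suffices to produce a radius $r$, depending only on $k$, such that $G(\mathfrak{A}_1) \rightleftarrows_r G(\mathfrak{A}_2)$ implies $\mathfrak{A}_1 \equiv_k \mathfrak{A}_2$, where $\mathfrak{A}_1 \equiv_k \mathfrak{A}_2$ means the two structures satisfy exactly the same first-order sentences of quantifier rank at most $k$; specializing to $\phi$ then yields $Q(\mathfrak{A}_1) = Q(\mathfrak{A}_2)$. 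One may take $r = 3^k$ (any bound of this exponential shape works). By the Ehrenfeucht--Fra\"{\i}ss\'e theorem, $\mathfrak{A}_1 \equiv_k \mathfrak{A}_2$ is equivalent to the Duplicator having a winning strategy in the $k$-round Ehrenfeucht--Fra\"{\i}ss\'e game on $\mathfrak{A}_1$ and $\mathfrak{A}_2$, so everything reduces to building such a strategy from the hypothesis that some bijection $f \colon A_1 \to A_2$ carries, for every vertex $a$, the radius-$r$ ball of $G(\mathfrak{A}_1)$ around $a$ isomorphically (as a pointed graph) onto the radius-$r$ ball of $G(\mathfrak{A}_2)$ around $f(a)$.

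The strategy is specified by an inductive invariant. After $i$ rounds, with pebbles on $a_1,\dots,a_i$ in $\mathfrak{A}_1$ and $b_1,\dots,b_i$ in $\mathfrak{A}_2$, the invariant asserts that $a_j \mapsto b_j$ extends to an isomorphism between the radius-$3^{\,k-i}$ neighborhoods of $\{a_1,\dots,a_i\}$ and $\{b_1,\dots,b_i\}$ in the respective Gaifman graphs, the neighborhood of a set being the union of the balls around its elements. For $i = 0$ this is exactly the hypothesis $G(\mathfrak{A}_1) \rightleftarrows_r G(\mathfrak{A}_2)$, which in particular says the two structures realize each radius-$r$ ball isomorphism type equally often. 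For the inductive step, say Spoiler pebbles a new vertex $a_{i+1}$ of $\mathfrak{A}_1$ (the other case is symmetric). If $a_{i+1}$ lies within distance $2 \cdot 3^{\,k-i-1}$ of some pebbled $a_j$, then its radius-$3^{\,k-i-1}$ ball is contained in the domain of the current isomorphism, so Duplicator responds with the image of $a_{i+1}$ and the invariant passes to radius $3^{\,k-i-1}$ by restriction. If instead $a_{i+1}$ is more than $2 \cdot 3^{\,k-i-1}$ from every pebbled vertex, its radius-$3^{\,k-i-1}$ ball is disjoint from those of the $a_j$, and Duplicator picks any vertex $b_{i+1}$ of $\mathfrak{A}_2$ whose radius-$3^{\,k-i-1}$ ball is isomorphic to that of $a_{i+1}$ and that is more than $2 \cdot 3^{\,k-i-1}$ from every pebbled $b_j$; grafting this ball onto the current isomorphism preserves the invariant. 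After $k$ rounds the invariant at radius $3^0 = 1$ forces $a_j \mapsto b_j$ to be a partial isomorphism, which is Duplicator's winning condition.

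The delicate point, and the step I expect to be the main obstacle, is the ``far vertex'' case: that a suitable $b_{i+1}$ always exists. Its existence comes from the bijection $f$, which guarantees that the two structures have equally many vertices of each radius-$3^{\,k-i-1}$ ball type; since at most the $i \le k$ existing pebbles of $\mathfrak{A}_2$ can block candidates by lying too close, a counting argument produces an admissible $b_{i+1}$ (if one prefers the threshold form of Hanf equivalence, this is exactly where one needs the threshold to be at least $k$). The remaining work, though routine bookkeeping with distances in the Gaifman graph, is to check that the geometric radii $3^{\,k-i}$ shrink quickly enough that restriction in the ``close'' case and grafting in the ``far'' case genuinely yield isomorphisms of the smaller neighborhoods --- this is where the precise constants, and a careful treatment of off-by-one errors in the radii, matter --- and that the base case is read off correctly from $f$. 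Assembling these pieces gives Duplicator's winning strategy, and hence Theorem~\ref{thm: FO implies Hanf}.
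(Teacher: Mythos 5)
The paper does not prove this statement: it is imported verbatim as Theorem~3.2 of the cited reference \cite{LIBKIN2006115}, so there is no in-paper proof to compare against. What you have written is a reconstruction of the standard Ehrenfeucht--Fra\"{\i}ss\'e argument from the literature (Fagin--Stockmeyer--Vardi, as presented in Libkin), and its architecture --- quantifier rank $k$, radius $r$ exponential in $k$, the shrinking-neighborhood invariant at radius $3^{\,k-i}$, and the close/far case split at distance $2\cdot 3^{\,k-i-1}$ --- is exactly right.

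The one place where your justification, as written, does not go through is the step you yourself flag: existence of $b_{i+1}$ in the far case. You argue that ``at most the $i\le k$ existing pebbles can block candidates,'' but with the definition of $\rightleftarrows_r$ used in this paper (an exact bijection $f$ matching ball types, with no degree bound on the structures) a single pebble $b_j$ can lie within distance $2\cdot 3^{\,k-i-1}$ of \emph{arbitrarily many} vertices of the required ball type, so counting blocked candidates pebble-by-pebble proves nothing. The correct argument is a subtraction: the bijection $f$ gives that both structures contain the same total number of vertices of each radius-$3^{\,k-i-1}$ ball type (since that type is determined by the radius-$r$ ball type); the inductive isomorphism between the radius-$3^{\,k-i}$ neighborhoods of the pebbled sets restricts to a bijection, preserving small-ball types, between the vertices within distance $2\cdot 3^{\,k-i-1}$ of $\{a_1,\dots,a_i\}$ and those within distance $2\cdot 3^{\,k-i-1}$ of $\{b_1,\dots,b_i\}$ (here one needs $2\cdot 3^{\,k-i-1}+3^{\,k-i-1}\le 3^{\,k-i}$, which is the reason for the base $3$); subtracting equal ``near'' counts from equal totals shows the ``far'' counts agree, and since $a_{i+1}$ witnesses that the far count in $\mathfrak{A}_1$ is positive, an admissible $b_{i+1}$ exists. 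Your parenthetical about the threshold form of Hanf equivalence belongs to a different (bounded-degree) variant of the theorem and is not what is needed here. You should also pin down the off-by-one issue you mention in the far case: if the new pebble is at distance exactly $2\cdot 3^{\,k-i-1}+1$ from an old one, the two radius-$3^{\,k-i-1}$ balls are disjoint but may still be joined by an edge, so the grafted map need not be an isomorphism of induced subgraphs; this is why careful presentations take the radii to be $(3^{\,k-i}-1)/2$ or adjust the distance threshold. With those two repairs the argument is the standard proof of the cited theorem.
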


% \begin{prop}[{\cite[Section 4.1]{libkin_modeltheory}}]
%     Graph connectivity cannot be described in first-order logic.
% \end{prop}
% \begin{proof}
%     Suppose that the graph connectivity query $Q$ is Hanf-local and let $hlr(Q) = d$. Let $m > 2d+1$ and choose two graphs $G^1_m$ and $G^2_m$ such that $G^1_m$ is comprised of two disjoint cycles of length $m$, and $G^2_m$ is one cycle of length $2m$. Let $f$ be any bijection between their nodes. The $d-$neighborhood of any node $a$ will be a chain of length $2d$ with $a$ in the middle. Therefore, $G^1_m \rightleftarrows_d G^2_m$ and because we assume the graph connectivity query is Hanf-local, they both must be connected. However, $G^1_m$ is not while $G^2_m$ is. Thus, graph connectivity is not Hanf-local, and so graph connectivity cannot be defined in first-order logic.
% \end{proof}

\section{The construction}\label{sec: the construction}
In order to use Hanf locality to prove that local and global $d$-rigidity are not expressible in the first order logic in the language of graphs,
it suffices to construct for each $d$ and each $r$, two graphs $G$ and $H$ such that $G$ is locally/globally rigid and $H$ is not, along with a bijection between the $r$-neighborhoods of $G$ and $H$.
One of the main results of recent work by Cruickshank, Jackson, and Tanigawa~\cite{cruickshank2024global} says that the one-skeletons of certain simplicial complexes are globally rigid; we will use this result to prove that our construction works.

A \emph{simplicial complex} is a pair $(V,\mathcal{F})$ consisting of a finite set $V$, and a collection $\mathcal{F}$ of subsets of $V$ such that $\emptyset \in \mathcal{F}$ and
$F \in \mathcal{F}$ whenever $F \subset G$ for some $G \in \mathcal{F}$.
Given a simplicial complex $\mathcal{C} = (V,\mathcal{F})$,
$V$ is called the \emph{ground set} or \emph{vertex set} of $\mathcal{C}$
and elements of $\mathcal{F}$ are called \emph{faces}.
The \emph{graph} of a simplicial complex $\mathcal{C} = (V,\mathcal{F})$ is the graph $G(\mathcal{C})$ on vertex set $V$
where $\{u,v\}$ is an edge if and only if it is a face of $\mathcal{C}$.
We will be particularly interested in the rigidity properties of the graphs of the following family of simplicial complexes.

\begin{defn}\label{defn: simplicial complex C^d_n}
    Let $d,n$ be positive integers with $d < n$.
    Let $\mathcal{C}_n^d$ denote the pure simplicial $(d-1)$-complex on ground set $\{0,\dots,n-1\}$
    whose $(d-1)$-faces are all $d$-element subsets of $\{0,\dots,n-1\}$ of the following form for $i = 0,\dots,n-1$
    \[
        \{i,i+1 \ ({\rm mod} \ n),\dots,i+d-1 \ ({\rm mod} \ n)\}.
    \]
\end{defn}

Given a graph $G = (V, E)$, a vertex $v \in V$, and a positive integer $r$, define the \emph{$r$-neighborhood of $v$}, denoted $N^{G}_r(v)$, to be the graph induced by the vertices that are distance at most $r$ away from $v$.
Two graphs $G_1 = (V_1,E_1)$ and $G_2 = (V_2,E_2)$ are \emph{isomorphic} if there exists a bijection $f: V_1 \rightarrow V_2$
such that $uv \in E_1$ if and only if $f(u)f(v) \in E_2$.
This is denoted symbolically as $G_1 \cong G_2$.

We are interested in $r$-neighborhoods of $G(\mathcal{C}_n^d)$.
To this end, we introduce the following related family of simplicial complexes.
They are a non-cyclic analogue of $\mathcal{C}_n^d$.

\begin{defn}
    Let $d,n$ be positive integers with $d < n$.
    Let $\mathcal{P}_n^d$ denote the pure simplicial $(d-1)$-complex on ground set $\{0,\dots,n-1\}$
    whose $(d-1)$-faces are all $d$-element subsets of $\{0,\dots,n-1\}$ of the following form for $i = 0,\dots,n-d$
    \[
        \{i,i+1,\dots,i+d-1\}.
    \]
\end{defn}

\begin{lemma}\label{lemma: isomorphic neighborhoods}
    If $n \ge 2rd+2$ then every $r$-neighborhood of a vertex in $G(\mathcal{C}_n^d)$ is isomorphic to $G(\mathcal{P}_{2rd+1}^d)$.
\end{lemma}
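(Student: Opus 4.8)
The plan is to understand explicitly which vertices lie within distance $r$ of a fixed vertex in $G(\mathcal{C}_n^d)$, show that the induced subgraph on those vertices coincides with $G(\mathcal{P}_{2rd+1}^d)$, and observe that this does not depend on the choice of base vertex. First I would record the edge structure of $G(\mathcal{C}_n^d)$: two vertices $a, b \in \{0,\dots,n-1\}$ are adjacent if and only if they both lie in some consecutive block $\{i, i+1, \dots, i+d-1\} \pmod n$, equivalently if and only if their cyclic distance $\delta(a,b) := \min(|a-b|, n-|a-b|)$ is at most $d-1$. (One should check that $n \ge 2rd+2 \ge 2d$ guarantees this cyclic distance is well-defined and the "wrap-around" faces behave as expected; this is where the hypothesis on $n$ first gets used.) Thus $G(\mathcal{C}_n^d)$ is exactly the $(d-1)$st power of the cycle $C_n$, and likewise $G(\mathcal{P}_{2rd+1}^d)$ is the $(d-1)$st power of the path on $2rd+1$ vertices.

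By vertex-transitivity of the cycle power, it suffices to analyze the $r$-neighborhood of the vertex $rd$ (chosen so that the neighborhood will turn out to be symmetric around it and live in $\{0, 1, \dots, 2rd\}$, avoiding wrap-around). The key claim is that the graph distance in $G(\mathcal{C}_n^d)$ from $rd$ to a vertex $a$ with cyclic distance $\delta := \delta(a, rd)$ equals $\lceil \delta/(d-1)\rceil$ when $d \ge 2$. The upper bound follows by explicitly stepping in increments of $d-1$ along the cycle; the lower bound follows because each edge changes position by at most $d-1$ in cyclic distance, so a path of length $\ell$ reaches only vertices with $\delta \le \ell(d-1)$. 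Consequently the $r$-neighborhood of $rd$ consists of exactly those $a$ with $\delta(a,rd) \le r(d-1) \le rd$, which (using $n \ge 2rd+2$, so that $r(d-1) < n/2$ and there is no wrap-around) is precisely the set $\{rd - r(d-1), \dots, rd + r(d-1)\}$. Relabeling this interval as $\{0, \dots, 2r(d-1)\}$ and noting that the induced subgraph is the $(d-1)$st path power on these vertices, I then need to see that $G(\mathcal{P}_{2rd+1}^d)$ restricted appropriately — or rather, that $G(\mathcal{P}_{m}^d)$ for $m = 2r(d-1)+1$ — is what appears; a short check reconciles the index $2rd+1$ in the statement with $2r(d-1)+1$, using that adding the extra $2r$ isolated-in-the-relevant-range vertices or, more likely, that the intended bound just needs $n$ large enough and $2rd+1 \ge 2r(d-1)+1$ so that $G(\mathcal{P}_{2rd+1}^d)$ contains the neighborhood as an induced subgraph; I would double-check whether the lemma wants exact isomorphism to $G(\mathcal{P}_{2rd+1}^d)$ or to the neighborhood-sized path power, and adjust the arithmetic accordingly (the $d=1$ case is a trivial separate check, since then $G(\mathcal{C}_n^1)$ has no edges).

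The main obstacle I anticipate is the careful bookkeeping around wrap-around and the exact size of the path complex: I must verify that within the $r$-neighborhood no "cyclic" face of $\mathcal{C}_n^d$ contributes an edge that a genuine path complex would lack, which is exactly the content of the inequality $n \ge 2rd + 2$ — it ensures the diameter-$r$ ball around any vertex spans strictly fewer than half the cycle, so the local picture is indistinguishable from the linear one. Once the distance formula and this non-overlap are nailed down, the isomorphism is just the obvious order-preserving relabeling of an interval of integers, and its correctness is immediate from the cyclic-distance description of adjacency. I would present the argument by first stating the adjacency-via-cyclic-distance fact as a sub-observation, then the distance formula as a short computation, then conclude.
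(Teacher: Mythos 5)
Your approach is the same as the paper's: use the rotation $i \mapsto i+1 \pmod n$ to reduce to the $r$-neighborhood of the single vertex $rd$, then identify that neighborhood explicitly as an interval of integers whose induced subgraph is a path power. The substantive difference is that you actually carry out the distance computation, and your answer is the correct one: adjacency in $G(\mathcal{C}_n^d)$ means cyclic distance at most $d-1$, so (for $d \ge 2$) the ball of radius $r$ around $rd$ is $\{rd - r(d-1), \dots, rd + r(d-1)\}$, an interval of $2r(d-1)+1$ vertices, with no wrap-around because $n \ge 2rd+2$ gives $r(d-1) < n/2$. The discrepancy you flag at the end is not something you need to ``reconcile'' --- it is an error in the paper. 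The paper's proof asserts that $i$ lies in the $r$-neighborhood of $rd$ if and only if $0 \le i \le 2rd$, which would require a single edge to span a cyclic distance of $d$; already for $d=2$ this fails, since $G(\mathcal{C}_n^2)$ is the cycle $C_n$ and the $r$-ball there is a path on $2r+1$ vertices, not $4r+1$. So the lemma as stated is false and should conclude with $G(\mathcal{P}_{2r(d-1)+1}^d)$ (with $d=1$ treated separately, as you note: there the graph is edgeless and every $r$-neighborhood is a single vertex). The correction is harmless downstream, since Lemma~\ref{lemma: bijection between neighborhoods} and Theorem~\ref{thm: not first-order} only use the fact that all $r$-neighborhoods of $G(\mathcal{C}_n^d)$ for $n \ge 2rd+2$ are isomorphic to one fixed graph independent of $n$. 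The only things to tighten in your write-up are to commit to the corrected constant rather than leaving the reconciliation open, and to spell out both directions of the distance formula (reachability by steps of size $d-1$, and the bound that each edge changes cyclic position by at most $d-1$) as you planned.
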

\begin{proof}
    The map $i \mapsto i+1 \ ({\rm mod}) \ n$ on the vertices of $G(\mathcal{C}_n^d)$ is a graph isomorphism,
    and the subgroup of the automorphism group that it generates acts transitively on the vertices of $G(\mathcal{C}_n^d)$.
    Thus it suffices to show that the the $r$-neighborhood of $rd$ is isomorphic to $G(\mathcal{P}_{2rd+1}^d)$.
    Indeed, $i$ is in the $r$-neighborhood of $rd$ in $G(\mathcal{C}_n^d)$ if and only if $0 \le i \le 2rd$.
    The subgraph of $G(\mathcal{C}_{n}^d)$ induced on these vertices is $G(\mathcal{P}_{2rd+1}^d)$.
\end{proof}

The following condition on a pair of graphs can be viewed as a sort of local isomorphism.
We will need it to state the definition of Hanf locality.

\begin{defn}
    If $G_1 = (V_1, E_2)$ and $G_2 = (V_2, E_2)$ are graphs, then we write 
    \begin{equation*}
        G_1 \rightleftarrows_r G_2
    \end{equation*}
    if there is a bijection $f : V_1 \to V_2$ such that for every $v \in V_1$
    \begin{equation*}
        N_r^{G_1}(v) \cong N_r^{G_2}(f(v)).
    \end{equation*}
\end{defn}

\begin{lemma}\label{lemma: bijection between neighborhoods}
    Let $G$ be the disjoint union of two copies of $G(\mathcal{C}_n^d)$.
    If $n \ge 2rd+2$ then $G(\mathcal{C}_{2n}^d) \rightleftarrows_r G$.
\end{lemma}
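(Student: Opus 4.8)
The plan is to show that \emph{every} $r$-neighborhood of a vertex, on either side, is isomorphic to one and the same graph, namely $G(\mathcal{P}_{2rd+1}^d)$. Once this is established, the relation $\rightleftarrows_r$ becomes trivial to witness: since $G(\mathcal{C}_{2n}^d)$ and $G$ both have exactly $2n$ vertices, we may take $f$ to be \emph{any} bijection between their vertex sets, and for each $v$ we get $N_r^{G(\mathcal{C}_{2n}^d)}(v) \cong G(\mathcal{P}_{2rd+1}^d) \cong N_r^{G}(f(v))$ by composing the two isomorphisms.

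Concretely, I would proceed as follows. First, check the numerology needed to invoke Lemma~\ref{lemma: isomorphic neighborhoods}: from $n \ge 2rd+2$ we get both $n \ge 2rd+2$ (trivially) and $2n \ge 2rd+2$, so the hypothesis of Lemma~\ref{lemma: isomorphic neighborhoods} is met for $\mathcal{C}_n^d$ and for $\mathcal{C}_{2n}^d$. Applying it to $\mathcal{C}_{2n}^d$ shows that every $r$-neighborhood of a vertex in $G(\mathcal{C}_{2n}^d)$ is isomorphic to $G(\mathcal{P}_{2rd+1}^d)$. Next, I would record the (elementary) observation that in a disjoint union, distances between vertices lying in different components are infinite, so for any vertex $v$ of $G$ the neighborhood $N_r^{G}(v)$ equals $N_r^{H}(v)$, where $H$ is the copy of $G(\mathcal{C}_n^d)$ containing $v$; applying Lemma~\ref{lemma: isomorphic neighborhoods} to $\mathcal{C}_n^d$ then shows this is again isomorphic to $G(\mathcal{P}_{2rd+1}^d)$. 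Combining the two, every $r$-neighborhood on either side is isomorphic to $G(\mathcal{P}_{2rd+1}^d)$, and the conclusion follows as in the previous paragraph.

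There is essentially no hard step here; the only things to be careful about are the inequality check that lets us apply Lemma~\ref{lemma: isomorphic neighborhoods} to $\mathcal{C}_{2n}^d$ (not just $\mathcal{C}_n^d$) and the disjoint-union remark that the $r$-neighborhood of a vertex of $G$ does not see the other component. Both are routine, and the role of $G(\mathcal{C}_n^d)$ being vertex-transitive (used inside Lemma~\ref{lemma: isomorphic neighborhoods}) is exactly what makes "all neighborhoods equal" true rather than just "all neighborhoods of one distinguished vertex."
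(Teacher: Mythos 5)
Your proposal is correct and follows essentially the same route as the paper: both reduce the claim to Lemma~\ref{lemma: isomorphic neighborhoods}, which shows that every $r$-neighborhood on either side is isomorphic to $G(\mathcal{P}_{2rd+1}^d)$, so that any bijection of vertex sets witnesses $\rightleftarrows_r$. Your write-up is in fact slightly more careful than the paper's, which names a specific bijection but leaves implicit the two points you spell out (that $2n \ge 2rd+2$ also holds, and that $r$-neighborhoods in a disjoint union stay within one component).
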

\begin{proof}
    The vertex set $V$ of $G$ is the disjoint union of two copies of $\{0,\dots,n-1\}$.
    Define $f: V \rightarrow \{0,\dots,2n-1\}$ to be the map that sends the first copy of $\{0,\dots,n-1\}$
    $\{0,\dots,n-1\}$, and the second copy to $\{n,\dots,2n-1\}$.
    The result now follows by Lemma~\ref{lemma: isomorphic neighborhoods}.
\end{proof}

A \emph{$k$-face} of a simplicial complex $(V,\mathcal{F})$ is a face of cardinality $k+1$\footnote{The reason that a $k$-face is defined to have $k+1$ vertices is because a $k$-face is an abstraction of a $k$-dimensional simplex, which is defined to be the convex hull of $k+1$ affinely independent points in a Euclidean space}.
A \emph{simplicial $k$-complex} is a simplicial complex whose faces of maximum cardinality are $k$-faces.
A simplicial $k$-complex is \emph{pure} if every face of maximum cardinality is a $k$-face.

\begin{defn}
    A pure simplicial $k$-complex is called a \emph{$k$-circuit} if every $(k-1)$ face is contained in an even number of $k$ faces.
\end{defn}

Our interest in $k$-circuits is due to the following result of Cruickshank, Jackson, and Tanigawa.
They were primarily interested in the special case of triangulated manifolds.

\begin{thm}[{\cite[Theorem 1.4]{cruickshank2024global}}]\label{thm: global rigidity of k circuits}
    Let $k \ge 3$ and let $G$ be the graph of a simplicial $k$-circuit.
    Then $G$ is globally $(k+1)$-rigid if and only if $G = K_{k+1}$ or $G = K_{k+2}$ or $G$ is $(k+2)$-connected. 
\end{thm}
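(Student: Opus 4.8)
The plan is to translate the statement into one about equilibrium stress matrices: use Connelly's sufficient condition for generic global rigidity (a generic framework carrying an equilibrium stress matrix of the maximum possible rank $n-d-1$ is globally rigid) together with the Gortler--Healy--Thurston converse, and then extract the required maximum-rank stress matrix from the $\mathbb{Z}_2$-cycle structure of a $k$-circuit and the connectivity hypothesis. First I would dispose of the small cases. Two distinct $k$-faces sharing a $(k-1)$-face already involve $k+2$ vertices, so a $k$-circuit has at least $k+2$ vertices; a short parity argument shows the only $k$-circuit on exactly $k+2$ vertices is the boundary of the $(k+1)$-simplex, whose graph is $K_{k+2}$, and as a complete graph on $k+2$ generic points in $\mathbb{R}^{k+1}$ this is globally $(k+1)$-rigid. (In particular no $k$-circuit has graph $K_{k+1}$, so that clause of the theorem is present only for uniformity with related statements.) It then remains to treat a $k$-circuit $C$ whose graph $G$ has $n \ge k+3$ vertices, and for such $G$ the Gortler--Healy--Thurston theorem says $G$ is globally $(k+1)$-rigid if and only if a generic framework $(G,p)$ admits an equilibrium stress matrix of rank $n-k-2$.

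\textbf{Necessity.} If $G$ is not $(k+2)$-connected, choose a cut $S$ with $|S|\le k+1$ separating $G$ into nonempty parts $A$ and $B$. The at most $k+1$ points $p(S)$ lie in some hyperplane of $\mathbb{R}^{k+1}$, so reflecting $p(B)$ across that hyperplane while fixing $p(A\cup S)$ yields a framework equivalent but (for generic $p$) not congruent to $(G,p)$; hence $G$ is not globally $(k+1)$-rigid. This settles the ``only if'' direction and uses only the elementary half of Gortler--Healy--Thurston.

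\textbf{Sufficiency.} Now assume $G$ is $(k+2)$-connected; I would produce a generic equilibrium stress matrix of rank $n-k-2$ in two stages. First, build an equilibrium stress $\omega$ that is nonzero on every edge of $G$: this is where the circuit hypothesis enters, through the simplicial cofactor/stress construction underlying Fogelsanger's proof that the graph of a $k$-circuit with $k\ge 2$ is generically $(k+1)$-rigid, which converts the $\mathbb{Z}_2$-cycle relation among the $k$-faces of $C$ into such a ``full'' stress. Second, upgrade this to a maximum-rank stress matrix using $(k+2)$-connectivity: I would induct over $(k+2)$-connected $k$-circuit graphs, reducing each to $K_{k+2}$ (or $K_{k+3}$) by a sequence of moves --- essentially inverse vertex splits together with the inverse of a connected sum along a facet --- each known to preserve the existence of a generic maximum-rank stress matrix, in the spirit of the Connelly--Whiteley theorem that vertex splitting preserves generic global rigidity and of the standard gluing lemmas for stress matrices. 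The point of the connectivity hypothesis is precisely to forbid the degenerate connected sums --- two $k$-circuits meeting along a $(k-1)$-face create a vertex cut of size $k<k+2$ --- that would otherwise leave the stress matrix rank-deficient.

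\textbf{Main obstacle.} The hard part is the second stage: establishing the inductive decomposition of $(k+2)$-connected $k$-circuit graphs into stress-matrix-preserving operations, and in particular controlling how those operations interact both with the class of $k$-circuits and with the support of the stress. This is presumably also where $k\ge 3$ is essential, the $k=2$ case (triangulated surfaces) being genuinely more delicate.
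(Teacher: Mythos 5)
This theorem is not proved in the paper at all: it is quoted verbatim from Cruickshank, Jackson, and Tanigawa \cite[Theorem 1.4]{cruickshank2024global} and used as a black box, so there is no in-paper argument to compare yours against. Judged on its own terms, your proposal is a reasonable high-level reconstruction of the strategy that the cited paper actually follows --- the Gortler--Healy--Thurston stress-matrix characterization, Hendrickson's reflection argument for necessity, Fogelsanger's construction of a nowhere-zero equilibrium stress from the $\mathbb{Z}_2$-cycle structure of a circuit, and an inductive decomposition via vertex splits and connected sums --- and your side remarks (a $k$-circuit has at least $k+2$ vertices, so the $K_{k+1}$ clause is vacuous; the degenerate connected sums are exactly what $(k+2)$-connectivity excludes) are correct.

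However, the proposal is not a proof. The necessity direction is fine (it is just Hendrickson's necessary condition and needs neither half of Gortler--Healy--Thurston), but the entire content of the sufficiency direction is deferred to the ``second stage'': you assert, without construction, that every $(k+2)$-connected $k$-circuit graph can be reduced to $K_{k+2}$ or $K_{k+3}$ by a sequence of inverse vertex splits and inverse facet connected sums, each preserving the existence of a generic maximum-rank stress matrix, while simultaneously staying inside the class of $k$-circuits and preserving the support of the stress. Establishing that decomposition --- and verifying that the stress-matrix-preserving lemmas apply at each step --- is precisely the substance of the Cruickshank--Jackson--Tanigawa paper, and you correctly flag it as the main obstacle without resolving it. So the proposal identifies the right road map but leaves the central step as a gap; it cannot be accepted as a proof of the statement.
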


The following lemma uses Theorem~\ref{thm: global rigidity of k circuits} to prove that certain graphs of the form $G(\mathcal{C}_n^d)$ are globally $d$-rigid.

\begin{lemma}\label{lemma: globally rigid}
    Let $n,d$ be integers with $n \ge 2d+2$.
    Then $G(\mathcal{C}_n^d)$ is globally rigid in~$\mathbb{R}^d$.
\end{lemma}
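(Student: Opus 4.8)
The plan is to apply Theorem~\ref{thm: global rigidity of k circuits} with $k = d-1$, so I must verify three things: that $G(\mathcal{C}_n^d)$ is the graph of a simplicial $(d-1)$-circuit, that $d - 1 \ge 3$ (equivalently $d \ge 4$), and that $G(\mathcal{C}_n^d)$ is $(d+1)$-connected (it is clearly neither $K_{d}$ nor $K_{d+1}$ once $n \ge 2d+2$, since it has more than $d+1$ vertices). The case $d \le 3$ must be handled separately, presumably by invoking the known characterizations for $d = 1, 2$ and the cited result of Cruickshank--Jackson--Tanigawa or an earlier theorem for $d = 3$; I expect the lemma as used only needs $d \ge 4$, or the small cases follow because $G(\mathcal{C}_n^d)$ contains a large clique-sum structure. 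I will focus on the generic case.

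First I would check the circuit condition. The complex $\mathcal{C}_n^d$ is pure of dimension $d-1$, with $(d-1)$-faces the $n$ "cyclic intervals" $F_i = \{i, i+1, \dots, i+d-1\}$ (indices mod $n$). A $(d-2)$-face is a $(d-1)$-element subset contained in some $F_i$; I need to show each such subset lies in an even number — in fact exactly two — of the $F_i$. A $(d-1)$-subset of a cyclic interval of length $d$ is obtained by deleting one element; one checks that the $(d-2)$-faces are exactly the sets of the form $\{i, i+1, \dots, i+d-1\} \setminus \{j\}$, and each is contained in precisely the two intervals $F_i$ and $F_{i'}$ for suitable $i, i'$ (the two "extensions" on either side), provided $n$ is large enough that these intervals are distinct and no accidental coincidences occur — this is where $n \ge 2d+2$ (hence the $n \ge 2d+2$ hypothesis, which via Lemma~\ref{lemma: isomorphic neighborhoods} with $r = 1$ also controls local structure) is used. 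So $\mathcal{C}_n^d$ is a $(d-1)$-circuit.

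Next I would verify $(d+1)$-connectivity of $G = G(\mathcal{C}_n^d)$. The graph $G$ has vertex set $\mathbb{Z}/n\mathbb{Z}$ with $u \sim v$ iff $u$ and $v$ lie in a common cyclic interval of length $d$, i.e.\ iff their cyclic distance is at most $d-1$; this is the circulant graph $C_n(1, 2, \dots, d-1)$, which is $(2d-2)$-regular. Since $2d - 2 \ge d+1$ for $d \ge 3$, a degree count is consistent with $(d+1)$-connectivity, but I need an actual vertex-cut argument: I would show that removing any set $S$ of at most $d$ vertices leaves the graph connected, by arguing that the remaining vertices, arranged on the cycle $\mathbb{Z}/n\mathbb{Z}$, still have every "gap" created by $S$ of length at most $d - 1$ — no, that is false in general — so instead I would use the standard fact that a circulant $C_n(1,\dots,d-1)$ with $n \ge 2d$ is $(2d-2)$-connected (it is a power of a cycle, $C_n^{d-1}$, and powers of cycles achieve connectivity equal to their regularity), hence in particular $(d+1)$-connected. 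I would cite or quickly prove this via a Menger-type / fan argument between any two vertices going "around both sides" of the cycle.

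The main obstacle is the circuit verification's combinatorial bookkeeping: ensuring that for the given range of $n$ every codimension-one face sits in exactly two facets, with no degenerate wrap-around identifications, and correctly identifying which $(d-1)$-subsets of an interval actually arise as faces (only the "prefix/suffix-deletion" ones do, not arbitrary ones, so the face poset is smaller than one might first guess). A clean way to organize this is to observe that $\mathcal{C}_n^d$ is the boundary complex in the sense that its facets, listed cyclically, form a closed "strip," and each ridge is shared by exactly the two consecutive facets along the strip — this makes the evenness (indeed "exactly two") immediate once $n$ is large enough to preclude a facet being adjacent to itself. I would phrase the argument that way to keep it short, then conclude by Theorem~\ref{thm: global rigidity of k circuits}.
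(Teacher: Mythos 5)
Your overall strategy is the same as the paper's: apply Theorem~\ref{thm: global rigidity of k circuits} with $k=d-1$ by verifying that $\mathcal{C}_n^d$ is a simplicial $(d-1)$-circuit and that $G(\mathcal{C}_n^d)$ is $(d+1)$-connected. You are in fact more careful than the paper on two points: you flag that the theorem requires $k\ge 3$, i.e.\ $d\ge 4$ (the paper's proof silently ignores this hypothesis), and your connectivity argument via the identification $G(\mathcal{C}_n^d)=C_n(1,\dots,d-1)=C_n^{d-1}$ and the maximal connectivity of powers of cycles is clean and replaces the paper's explicit path-rerouting argument.

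The genuine gap is in the circuit verification, and it cannot be patched. Because a simplicial complex is closed under taking subsets, \emph{every} $(d-1)$-element subset of a facet $F_i=\{i,\dots,i+d-1\}$ is a $(d-2)$-face, including those obtained by deleting an \emph{interior} element $j$ with $i<j<i+d-1$. Such a face still contains both endpoints $i$ and $i+d-1$, and for $n\ge 2d+2$ the only cyclic interval of length $d$ containing two vertices at cyclic distance $d-1$ is $F_i$ itself; so this face lies in exactly \emph{one} facet, and one is odd. Hence for $d\ge 3$ the complex $\mathcal{C}_n^d$ is \emph{not} a $(d-1)$-circuit under the definition given. Your first claim (each $(d-1)$-subset of a facet lies in precisely two facets) is false for interior deletions, and your fallback (only the prefix/suffix-deletion subsets ``actually arise as faces'') contradicts downward closure. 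The paper's own proof has the identical omission --- it checks only the consecutive-interval $(d-2)$-faces. Worse, the statement itself fails for large $n$, so no route can work: $G(\mathcal{C}_n^d)$ is $(2d-2)$-regular with $n(d-1)$ edges, which is strictly smaller than the $dn-\binom{d+1}{2}$ edges that every locally $d$-rigid graph on $n\ge d+1$ vertices must have once $n>\binom{d+1}{2}$; since Theorem~\ref{thm: not first-order} invokes the lemma with $n=4rd+4$ for arbitrarily large $r$, the construction would need a different (denser) family of complexes, not merely a repaired verification.
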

\begin{proof} 
    By Theorem~\ref{thm: global rigidity of k circuits}, it suffices to show that $\mathcal{C}_n^d$ is a simplicial $(d-1)$-circuit that is $(d+1)$-connected.
    It is indeed a $(d-1)$ circuit since each $(d-2)$-face $\{i,i+1 \ ({\rm mod} \ n),\dots,i+d-2 \ ({\rm mod} \ n)\}$ is contained in exactly two $(d-1)$-faces, namely
    \[
        \{i,i+1 \ ({\rm mod} \ n),\dots,i+d-1 \ ({\rm mod} \ n)\}
    \]
    and
    \[
        \{i-1,i,i+1 \ ({\rm mod} \ n),\dots,i+d-2 \ ({\rm mod} \ n)\}.
    \]

    We now argue that $G(\mathcal{C}^d_n)$ is $(d+1)$-connected.
    Let $S = \{v_1,\dots,v_d\}$ be a set of $d$ vertices, and consider the graph $H$ obtained by removing $S$.
    We argue that $H$ is connected.
    If $S$ is a $(d-1)$-face of $\mathcal{C}^n_d$ then $H$ is isomorphic to $G(\mathcal{P}^d_{n-d})$ which is connected.
    If there exists a $(d-1)$ face $F$ of $\mathcal{C}^n_d$ such that $F \setminus S = \{v\}$, let $H'$ denote
    the graph obtained from $G(\mathcal{C}^d_n)$ by removing $S \cup \{v\}$.
    Then $H'$ is a one-vertex deletion of a graph isomorphic to the 2-connected graph $G(\mathcal{P}^d_{n-d})$
    so $H'$ is connected.
    Since $H'$ is a subgraph of $H$, this implies $H$ is connected.

    Finally, assume every $(d-1)$-face of $\mathcal{C}^d_n$ has at least two vertices not in $S$
    and let $u,w$ be two vertices of $H$.
    Let $u_1,\dots,u_k$ be a path from $u$ to $w$ in $G(\mathcal{C}^d_n)$.
    If there exists an $i$ such that $u_i \in S$, then $i \in \{2,\dots,k-1\}$ as $u_1 = u$ and $u_k = w$.
    By induction on the number of $u_j$'s in $S$,
    it suffices to show how to replace $u_i$ with a path from $u_{i-1}$ to $u_{i+1}$ that avoids elements of $S$.
    
    Without loss of generality, assume $u_{i-1} \le u_i \le u_{i+1}$.
    Let $F_-$ and $F_+$ denote the following $(d-1)$-faces of $\mathcal{C}^d_n$
    \begin{align*}
        F_- &:= \{u_{i-1},u_{i-1} + 1,\dots, u_{i-1} + d-1\} \quad {\rm and} \\
        F_+ &:= \{u_{i+1}-(d-1),\dots,u_{i+1}-1 ,u_{i+1}\}.
    \end{align*}
    Since $u_i$ is adjacent to both $u_{i-1}$ and $u_{i+1}$ in $G(\mathcal{C}^d_n)$,
    $F_-$ and $F_+$ both contain $u_i$.
    By our assumption, $F_-$ contains a vertex $w_- \neq u_i$ with $w_- \notin S$ and $F_+$ contains a vertex $w_+ \neq u_i$ with $w_+ \notin S$.
    Then $u_{i-1}$ is adjacent to $w_-$ and $u_{i+1}$ is adjacent to $w_+$ in $G(\mathcal{C}^d_n)$.
    If $\mathcal{C}^d_n$ has a face containing both $w_-$ and $w_+$
    then $w_-$ and $w_+$ are adjacent in $G(\mathcal{C}^d_n)$ thus giving us a path from $u_{i-1}$ to $u_{i+1}$ avoiding $S$.
    If not, then we can repeat this process with $w_-$ in place of $u_{i-1}$ and $w_+$ in place of $u_{i+1}$.
    This completes the proof by induction on $u_{i+1}-u_{i-1}$.
\end{proof}

We are now ready to prove our main result.

\begin{thm}\label{thm: not first-order}
    Local/global rigidity is not definable in first-order theory of graphs.
\end{thm}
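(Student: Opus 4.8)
The plan is to argue by contradiction, in exactly the spirit of the classical proof that connectivity is not first-order definable, but with cycles replaced by the graphs $G(\mathcal{C}_n^d)$. Suppose that one of the queries ${\rm LOC}_d$, ${\rm GLO}_d$ is definable in first-order logic. By Theorem~\ref{thm: FO implies Hanf} it is then Hanf-local, so there is a radius $r \ge 0$ such that $G(\mathfrak{A}_1) \rightleftarrows_r G(\mathfrak{A}_2)$ implies that the query agrees on $\mathfrak{A}_1$ and $\mathfrak{A}_2$. I would then fix any $n$ with $n \ge \max(d+1,\,2rd+2)$ and set $\mathfrak{A}_1 := G(\mathcal{C}_{2n}^d)$ and $\mathfrak{A}_2 := G$, the disjoint union of two copies of $G(\mathcal{C}_n^d)$.

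Next I would check that the two sides disagree. Since $2n \ge 2d+2$, Lemma~\ref{lemma: globally rigid} gives that $\mathfrak{A}_1 = G(\mathcal{C}_{2n}^d)$ is globally $d$-rigid, so ${\rm GLO}_d(\mathfrak{A}_1) = {\rm True}$; and because global $d$-rigidity implies local $d$-rigidity directly from the definitions — if every equivalent framework is congruent, then certainly every equivalent framework within $\varepsilon$ is congruent, for any $\varepsilon$ — we also get ${\rm LOC}_d(\mathfrak{A}_1) = {\rm True}$. On the other hand $\mathfrak{A}_2$ is disconnected with each component on $n \ge 2$ vertices, and such a graph is neither locally nor globally $d$-rigid: for generic $p$, translating one component by an arbitrarily small vector yields an equivalent framework that is not congruent to $(\mathfrak{A}_2, p)$. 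Hence whichever of ${\rm LOC}_d$, ${\rm GLO}_d$ was assumed definable takes the value ${\rm True}$ on $\mathfrak{A}_1$ and ${\rm False}$ on $\mathfrak{A}_2$.

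Finally, because $n \ge 2rd+2$, Lemma~\ref{lemma: bijection between neighborhoods} yields $G(\mathcal{C}_{2n}^d) \rightleftarrows_r G$, that is, $G(\mathfrak{A}_1) \rightleftarrows_r G(\mathfrak{A}_2)$. Hanf locality then forces the query to agree on $\mathfrak{A}_1$ and $\mathfrak{A}_2$, contradicting the previous paragraph. This rules out first-order definability of ${\rm LOC}_d$ and of ${\rm GLO}_d$ for the values of $d$ in which Lemma~\ref{lemma: globally rigid} applies; the remaining low-dimensional cases reduce to the classical non-definability of connectivity and $2$-connectivity.

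I do not expect a serious obstacle here: essentially all the content has been front-loaded into Lemmas~\ref{lemma: bijection between neighborhoods} and~\ref{lemma: globally rigid} (and, behind the latter, Theorem~\ref{thm: global rigidity of k circuits}), so the theorem itself is a short assembly. The only points that need care are (i) confirming that a single pair of graphs simultaneously witnesses non-locality for \emph{both} ${\rm LOC}_d$ and ${\rm GLO}_d$ — which is why I emphasize that a disjoint union is never rigid and that global rigidity implies local rigidity — and (ii) lining up the "for each $r$" coming from Hanf locality with the numerical hypothesis $n \ge 2rd+2$ of Lemma~\ref{lemma: bijection between neighborhoods}.
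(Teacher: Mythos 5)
Your proposal is correct and follows essentially the same route as the paper: both reduce to non-Hanf-locality via Theorem~\ref{thm: FO implies Hanf} and compare $G(\mathcal{C}_{2n}^d)$ against a disjoint union of two copies of $G(\mathcal{C}_n^d)$ using Lemmas~\ref{lemma: bijection between neighborhoods} and~\ref{lemma: globally rigid}. Your explicit choice $n \ge \max(d+1,\,2rd+2)$ and the spelled-out justifications (global implies local, disconnected implies not rigid) are slightly more careful than the paper's, but the argument is the same.
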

\begin{proof}
    By Theorem~\ref{thm: FO implies Hanf}, it suffices to show that the queries ${\rm LOC}_d$ and ${\rm GLO}_d$, as defined in \eqref{loc} and \eqref{glo}, are not Hanf local.
    Indeed, let $r \ge 0$ and define $G_1:= G(\mathcal{C}_{4rd+4}^d)$ and define $G_2$ to be the disjoint union of two copies of $G(\mathcal{C}_{2rd+2}^d)$.
    Lemma~\ref{lemma: bijection between neighborhoods} implies that $G_1 \rightleftarrows_r G_2$
    and Lemma~\ref{lemma: globally rigid} implies that $G_1$ is globally $d$-rigid and therefore locally $d$-rigid as well.
    Since $G_2$ is disconnected, it is neither locally nor globally $d$-rigid.
    Thus ${\rm LOC}_d(G_1) = {\rm GLO}_d(G_1) = {\rm True}$ whereas ${\rm LOC}_d(G_2) = {\rm GLO}_d(G_2) = {\rm False}$
    and so neither ${\rm LOC}_d$ nor ${\rm GLO}_d$ is Hanf local.
\end{proof}

\bibliographystyle{abbrv}
\bibliography{bibliography}

\end{document}